\newcommand{\CC}{{\mathbb C}}
\def\bege{\begin{equation}} \def\ende{\end{equation}}
\def\begr{\begin{eqnarray}} \def\endr{\end{eqnarray}}
\def\CC{ \mathbb{C}}
\newcommand{\DD}{{\mathbb D}}
\def\T{\mathbb{T}}
\def\D{\mathbb{D}}
\def\a{\alpha}
\def\b{\beta}
\def\begr{\begin{eqnarray}} \def\endr{\end{eqnarray}}
\def\gaon{ \Gamma(\eta)\cap\Omega_{\epsilon}(f)}
\def\on{\Omega_{\epsilon}(f)}
\def\a{\alpha}\def\b{\beta}
\def\msk{\medskip}
\def\ol{\overline}
\def\f{\frac}
\def\dt{\mathcal{C}_{\mathcal{B}_{{\log}^\gamma}^\beta}(DT_p^q(\alpha)\cap\mathcal{B}_{{\log}^\gamma}^\beta)}
\def\l{\log\frac{e}{1-|z|^2}}
\def\Blog{\mathcal{B}_{{\log}^\gamma}^\beta}
\def\Blogo{   \mathcal{B}_{{\log}^\gamma,0}^\beta}
\def\msk{\medskip}
\def\ol{\overline}
\newtheorem{Lemma}{Lemma}[section]
\newtheorem{Theorem}[Lemma]{Theorem}
\newtheorem{Corollary}[Lemma]{Corollary}
\newcounter{other}            
\begin{document}
	\title[]{The closure of derivative tent spaces in the logarithmic Bloch-type norm}
	
	\author{Rong Yang and Xiangling Zhu$\dagger$}
	
	\address{Rong Yang
	\\ Institute of Fundamental and Frontier Sciences, University of Electronic Science and Technology of China, 610054, Chengdu, Sichuan, P.R. China.}
	\email{yangrong071428@163.com  }

\address{Xiangling Zhu
	\\ University of Electronic Science and Technology of China, Zhongshan Institute, 528402, Zhongshan, Guangdong, P.R.
	China.}
    \email{jyuzxl@163.com}

	\subjclass[2010]{30H99, 30H30}

	\begin{abstract}

In this paper,  the derivative tent space \(DT_p^q(\alpha)\) is introduced.  
Then, we study \(\mathcal{C}_{\mathcal{B}_{{\log}^\gamma}^\beta}(DT_p^q(\alpha)\cap\mathcal{B}_{{\log}^\gamma}^\beta)\), the  closure of the derivative tent space \(DT_p^q(\alpha)\) in the logarithmic Bloch-type space \(\Blog\).	As a byproduct, some new characterizations for \(C_\mathcal{B}(\mathcal{D}^p_{\alpha} \cap \mathcal{B})\) and \(C_{\mathcal{B}_{{\log}}}(\mathcal{D}^2_{\alpha}\cap\mathcal{B}_{{\log}})\) are obtained.

	\thanks{$\dagger$ Corresponding author.}
	\vskip 3mm \noindent{\it Keywords}: Bloch-type space, derivative tent space, closure.
	\end{abstract}
	
	\maketitle

\section{Introduction}

Let $\mathbb{D}$ be the open unit disk in the complex plane $\mathbb{C}$.  Define $H(\mathbb{D})$ as the set of all analytic functions on $\mathbb{D}$. 
Let $\zeta > \frac{1}{2}$ and $\eta \in \mathbb{T}$, the boundary of $\DD$. The non-tangential approach region $\Gamma_\zeta(\eta)$ is defined as
\[
\Gamma(\eta)=\Gamma_\zeta(\eta) = \left\{z \in \mathbb{D} : |z - \eta| < \zeta(1 - |z|^2)\right\}.
\]

For $0 < p, q < \infty$ and $\alpha > -2$, the tent space $T_p^q(\alpha)$ consists of all measurable functions $f$ on $\mathbb{D}$ such that
\begin{align*}
	\|f\|_{T_p^q(\alpha)}^q=\int_\mathbb{T}\left(\int_{\Gamma(\eta)}|f(z)|^p(1-|z|^2)^{\a} d A(z)\right)^{\frac{q}{p}}|d \eta|<\infty.
\end{align*}
Here \(dA(z) = \frac{1}{\pi}dx dy\) is the normalized Lebesgue area measure on \(\mathbb{D}\). Tent spaces were first presented in the work of Coifman, Meyer, and Stein \cite{cms} to tackle problems in harmonic analysis. They created a unified framework for studying problems related to classical function spaces such as Hardy and Bergman spaces. In the above definition, the aperture $\zeta$ of the non-tangential region $\Gamma_\zeta(\eta)$ isn't explicitly stressed. This is because for any two different apertures, the resulting function spaces have equivalent quasi-norms. 

Denote the intersection of \(T_p^q(\a)\) and \(H(\D)\) as \(AT_p^q(\a)\) (the analytic tent space). When \(q = p\), \(AT_p^p(\alpha)=A^p_{\alpha + 1}\), where \(A^p_{\alpha+1}\) is the weighted Bergman space. Moreover, a function \(f\in H^q\) if and only if \(f'\in AT_2^q\). Here, \(H^q\) is the Hardy space. That is:
$$
 \int_{\mathbb{T}} \left( \int_{\Gamma(\eta)} |f'(z)|^2dA(z) \right)^{\frac{q}{2}} |d\eta| < \infty.
$$
This result is due to Marcinkiewicz and Zygmund \cite{ma} for \(p > 1\), and Calder\'on \cite{ca} extended it to the case \(0 < p \leq 1\). 
For the case of the unit ball, see \cite[Theorem 5.3]{pj}. For more information on the analytic tent space, see \cite{pa, yhl}.

 For $0< p<\infty$ and $\beta>-1$,  a  function $f \in H(\DD)$ belongs to the weighted Dirichlet  space $\mathcal{D}_{\beta}^p$  if
   $$\|f\|_{\mathcal{D}_{\beta}^p}^{p}=|f(0)|^{p}+\int_{\DD}|f'(z)|^p(1-|z|^2)^{\beta}dA(z)<\infty.$$ 
 The weighted Dirichlet space $\mathcal{D}_{p}^p$ is just the Bergman space $A^p$.  
   In particular, when $p=2$, the weighted Dirichlet space $\mathcal{D}_{1}^2$ is just the Hardy space $H^2$.

Let \(0 < p, q <\infty\) and \(\alpha >-2\).  Inspired by the definition of the tent space and the above mentioned results,  it is natural to define the derivative tent space  as follows:
\[
DT_p^q(\alpha)=\left\{ f\in H(\mathbb{D}): 	\|f\|_{DT_p^q(\alpha)} <\infty \right\},
\]
where
\begin{align*}
	\|f\|_{DT_p^q(\alpha)}^{q}=|f(0)|^q+\int_{\mathbb{T}}\left(\int_{\Gamma(\eta)}|f'(z)|^p(1 - |z|^2)^{\alpha}dA(z)\right)^{\frac{q}{p}}|d\eta|.
\end{align*} 
It is clear that the weighted Dirichlet space \(\mathcal{D}^p_{\beta}\) is just the derivative tent space \(DT_p^p(\beta-1)\) when $\beta>-1$.   A function $f\in H^q$  if and only if $f\in DT_2^q$.   A function $f\in A^p$  if and only if $f\in DT_p^p(p-1)$.
We believe this new space provides new perspectives for studying tent spaces and new ways to investigate Hardy spaces, weighted Bergman spaces and weighted Dirichlet spaces.

For   $\beta>0$ and  $\gamma\geq0$, let us recall the definition of the logarithmic Bloch-type space, denoted as $\Blog$. This space  consists of all $f\in H(\D)$  such that (\cite{st2})
$$
\|f\|_{\Blog}=|f(0)|+\sup_{z\in\D}(1 - |z|^{2})^{\beta}|f'(z)|\left( \l\right)^\gamma<\infty.
$$
Equipped with the norm $\|\cdot\|_{\Blog}$, $\Blog$ forms a Banach space.
When $\gamma = 0$, we get the Bloch-type space $\mathcal{B}^\beta$. Notably, for $\beta = 1$ and $\gamma = 0$, it's the classical Bloch space $\mathcal{B}$. When $\beta = 1$ and $\gamma = 1$, we have the logarithmic Bloch space $\mathcal{B}_{{\log}}$.    The little logarithmic Bloch-type space, denoted by $\Blogo$, is the set of all $f\in\Blog$ satisfying the condition
$$\lim_{|z|\to 1}(1 - |z|^{2})^{\beta}|f'(z)|\left( \l\right)^\gamma=0.$$
When $\beta = 1$ and $\gamma = 0$, the little logarithmic Bloch-type space $\Blogo$ is just the little Bloch space $\mathcal{B}_0$.
 
Let \(X\) be a subspace of \(Y\), \(C_Y(X)\) the closure of \(X\) in the \(Y\)-norm. In \cite{acp}, Anderson et al. raised an open question on the closure of \(H^\infty\) in the Bloch norm. Ghatage and Zheng described the closure of BMOA in the Bloch norm \cite{gz}. 
Monreal and Nicolau in \cite{mn} characterized \(C_\mathcal{B}(H^p \cap \mathcal{B})\) for \(1 < p < \infty\).
Galanopoulos, Monreal, and Pau extended this result to the range \(0 < p < \infty\) in \cite{gmp}.
In \cite{zhao1}, Zhao investigated the closure of certain Möbius invariant spaces in the Bloch norm.
Bao and G\"{o}\.{g}\"{u}\c{s} studied the closure of the spaces \(\mathcal{D}^2_{\alpha}(-1<\alpha\le 1)\) in the Bloch norm in \cite{bg}.
In \cite{gg}, Galanopoulos and Girela characterized \(C_\mathcal{B}(\mathcal{D}^p_{\alpha} \cap \mathcal{B})\) when \(1 \leq p < \infty\) and \(\alpha > -1\).
Qian and Li characterized \(C_{\mathcal{B}_{{\log}}}(\mathcal{D}^2_{\alpha}\cap\mathcal{B}_{{\log}})(\alpha>0)\) in \cite{ql}.
Subsequently, Bao, Lou, and Zhou addressed the open question  posed by Qian  and Li in \cite{blz} and investigated \(C_{\mathcal{B}_{{\log}}}(\mathcal{D}^2_{\alpha}\cap\mathcal{B}_{{\log}})(\alpha>-1)\). 
 For further research on closures, refer to  \cite{lr,chen,zy} and the references therein.

In this paper, we study  the closure of \(DT_p^q(\alpha)\cap\Blog\) in the logarithmic Bloch-type norm. As a by-product, we obtain some new characterizations for \(C_\mathcal{B}(\mathcal{D}^p_{\alpha} \cap \mathcal{B})\)  and \(C_{\mathcal{B}_{{\log}}}(\mathcal{D}^2_{\alpha}\cap\mathcal{B}_{{\log}})\).

Throughout this paper, we assert that $E \lesssim F$ if there exists a constant $C$ such that $E \leq CF$. The notation $E \asymp F$ signifies that both $E \lesssim F$ and $F \lesssim E$.\vskip 5mm

\section{Main results and proofs}
In this section, we describe the closure of the space $DT_p^q(\alpha) \cap \Blog$ in the logarithmic Bloch-type norm.  First, we state some lemmas. The following lemma is crucial   in our proof.

\begin{Lemma}\cite[Lemma 4]{ar}\label{le1}
	Let $0<p, q<\infty$ and $\lambda>\max \left\{1, \frac{p}{q}\right\}$. Then there are constants $C_1=C_1(p, q, \lambda)$ and $C_2=C_2(p, q, \lambda)$ such that
	$$
	C_1 \int_{\mathbb{T}} \mu(\Gamma(\eta))^{\frac{q}{p}}|d \eta| \leq \int_{\mathbb{T}}\left(\int_{\mathbb{D}}\left(\frac{1-|z|^2}{|1-z \bar{\eta}|}\right)^\lambda d\mu\right)^{\frac{q}{p}}|d \eta| \leq C_2 \int_{\mathbb{T}} \mu(\Gamma(\eta))^{\frac{q}{p}}|d \eta|
	$$
	for every positive measure $\mu$ on $\mathbb{D}$.
\end{Lemma}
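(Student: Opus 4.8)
The plan is to prove the two inequalities separately, the left one being elementary and the right one carrying all the weight. Throughout write $A_\zeta(\eta):=\mu(\Gamma_\zeta(\eta))$ for the tent of $\mu$ at aperture $\zeta$, and set $r:=q/p$. The starting observation, used in both directions, is the identity $|1-z\bar\eta|=|\eta-z|$ on $\mathbb{D}\times\mathbb{T}$; consequently the super-level sets of the kernel are themselves non-tangential regions,
\[
\Big\{z\in\mathbb{D}:\tfrac{1-|z|^2}{|1-z\bar\eta|}>s\Big\}=\Gamma_{1/s}(\eta),\qquad 0<s<2,
\]
and the kernel never exceeds $2$. For the left inequality I would simply note that on the fixed region $\Gamma(\eta)=\Gamma_\zeta(\eta)$ one has $\frac{1-|z|^2}{|1-z\bar\eta|}>\zeta^{-1}$, so the inner integral is at least $\zeta^{-\lambda}A_\zeta(\eta)$; raising to the power $r$ and integrating over $\mathbb{T}$ yields a valid $C_1$.

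For the right inequality I would first convert the inner integral into an average of tents over all apertures by the layer-cake formula and the displayed identity, and then discretise the aperture dyadically:
\[
\int_{\mathbb{D}}\Big(\tfrac{1-|z|^2}{|1-z\bar\eta|}\Big)^{\lambda}d\mu=\lambda\int_0^2 A_{1/s}(\eta)\,s^{\lambda-1}\,ds\lesssim\sum_{j\ge 0}2^{-j\lambda}A_{2^{j}}(\eta).
\]
The task is thus to sum the widened tents $A_{2^{j}}$ against the geometric weights $2^{-j\lambda}$ in $L^{r}(\mathbb{T})$, and the whole matter reduces to a single change-of-aperture estimate,
\[
\int_{\mathbb{T}}A_{\zeta}(\eta)^{r}\,|d\eta|\lesssim \zeta^{\max\{1,r\}}\int_{\mathbb{T}}A_{1}(\eta)^{r}\,|d\eta|,\qquad \zeta\ge 1,
\]
together with two elementary summation devices: Minkowski's inequality in $L^{r}(\mathbb{T})$ when $q\ge p$ (so $r\ge 1$), and the subadditivity $(\sum_j a_j)^{r}\le\sum_j a_j^{r}$ when $q<p$ (so $r<1$). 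In the first case one gets $\big\|\,\sum_j 2^{-j\lambda}A_{2^j}\big\|_{L^{r}}\lesssim\sum_j 2^{-j(\lambda-1)}\|A_1\|_{L^{r}}$, and in the second $\int(\sum_j 2^{-j\lambda}A_{2^j})^{r}\lesssim\sum_j 2^{-j(\lambda r-1)}\int A_1^{r}$; both geometric series converge exactly when $\lambda>\max\{1,1/r\}=\max\{1,p/q\}$, which is precisely the hypothesis. Finally $\int_{\mathbb{T}}A_1^{r}\asymp\int_{\mathbb{T}}A_\zeta^{r}$ for the fixed aperture of $\Gamma(\eta)$, producing $C_2$.

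The main obstacle is the change-of-aperture estimate, and in particular the sharp growth $\zeta^{\max\{1,r\}}$ of its constant, since this exponent is exactly what forces the threshold $\lambda>\max\{1,p/q\}$. When $r=1$ it is immediate from Fubini: writing $A_\zeta(\eta)=\int_{\mathbb{D}}\mathbf 1_{\{z\in\Gamma_\zeta(\eta)\}}\,d\mu$ and noting that $\{\eta:z\in\Gamma_\zeta(\eta)\}$ is an arc of length $\lesssim\zeta$ times that of $\{\eta:z\in\Gamma_1(\eta)\}$ gives $\int_{\mathbb{T}}A_\zeta\lesssim \zeta\int_{\mathbb{T}}A_1$. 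The cases $r\ne 1$ are the genuine technical core: for $r>1$ I would aim to establish the pointwise bound $A_\zeta(\eta)\lesssim\zeta\,M(A_1)(\eta)$, where $M$ is the Hardy–Littlewood maximal operator on $\mathbb{T}$, by covering the part of $\Gamma_\zeta(\eta)$ at each dyadic height with a centred arc of proportional length and then invoking the maximal theorem on $L^{r}$; for $r<1$ the maximal function is unavailable and one must instead run a good-$\lambda$ (stopping-time) comparison of the distribution functions of $A_\zeta$ and $A_1$. Both the delicate summation over heights in the pointwise bound and the good-$\lambda$ argument for small $r$ are where the real work lies; everything else is bookkeeping dictated by the geometric series above.
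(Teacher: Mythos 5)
The paper does not prove this lemma at all --- it is quoted verbatim from Arsenovi\'c \cite{ar} --- so your proposal has to be judged on its own. Your architecture is reasonable: the left inequality is correct as you give it, the layer-cake reduction to dyadic apertures is correct, and your bookkeeping rightly shows that a change-of-aperture estimate $\int_{\mathbb{T}}A_\zeta^{r}\,|d\eta|\lesssim\zeta^{\max\{1,r\}}\int_{\mathbb{T}}A_1^{r}\,|d\eta|$ with exactly that power of $\zeta$ would produce the threshold $\lambda>\max\{1,p/q\}$. That estimate is in fact true, so the reduction is sound.

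The gap is that both methods you propose for the change-of-aperture estimate fail. The pointwise bound $A_\zeta(\eta)\lesssim\zeta\,M(A_1)(\eta)$ is false, and no power of $\zeta$ repairs it: take $\mu=\sum_{k=1}^{N}\delta_{w_k}$ with $1-|w_k|\approx 2^{-k}$ and $w_k$ at angular distance about $\zeta 2^{-k}$ from $\eta_0$, i.e.\ just inside the edge of $\Gamma_\zeta(\eta_0)$. Then $A_\zeta(\eta_0)=N$, while $A_1=\sum_k\mathbf{1}_{I_k}$ for pairwise disjoint arcs $I_k$ of length $\sim 2^{-k}$ sitting at distance $\sim\zeta 2^{-k}$ from $\eta_0$; any arc of length $L$ containing $\eta_0$ meets only those $I_k$ with $2^{-k}\lesssim L/\zeta$ and so carries $A_1$-mass $\lesssim L/\zeta$, whence $M(A_1)(\eta_0)\lesssim 1/\zeta$ and $\zeta M(A_1)(\eta_0)=O(1)$ against $A_\zeta(\eta_0)=N$. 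This is exactly the ``summation over heights'' you flag: each dyadic slab of $\Gamma_\zeta(\eta)$ is individually controlled by $\zeta M(A_1)(\eta)$, but there are infinitely many slabs and no decay. Pointwise maximal control of the wider cone is a feature of sup-type functionals such as $\sup_{\Gamma_\zeta(\eta)}|u|$, not of the integral functional $\mu(\Gamma_\zeta(\eta))$. The same example defeats your plan for $r<1$: $A_\zeta$ attains the level $N$ while $A_1\le 1$ everywhere, so neither $|\{A_\zeta>t\}|\lesssim\zeta|\{A_1>ct\}|$ nor the good-$\lambda$ inequality $|\{A_\zeta>2t,\ A_1\le\epsilon t\}|\le C\epsilon^{\delta}|\{A_\zeta>t\}|$ can hold. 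For $r>1$ the correct route is duality: writing $J_z^{\zeta}=\{\eta: z\in\Gamma_\zeta(\eta)\}$, one has $\int_{\mathbb{T}}A_\zeta h\,|d\eta|=\int_{\mathbb{D}}\bigl(\int_{J_z^{\zeta}}h\bigr)d\mu(z)\le C\zeta\int_{\mathbb{D}}\bigl(\int_{J_z^{1}}Mh\bigr)d\mu(z)=C\zeta\int_{\mathbb{T}}A_1\,Mh\,|d\eta|$, then the maximal theorem on $L^{r'}$. For $r\le 1$ one usually abandons the aperture reformulation altogether and argues via a Whitney decomposition of $\mathbb{D}$, the subadditivity $(\sum_j a_j)^{r}\le\sum_j a_j^{r}$ applied directly to the kernel integral, and $\int_{\mathbb{T}}\bigl(\frac{1-|a|^2}{|1-\bar a\eta|}\bigr)^{\lambda r}|d\eta|\lesssim 1-|a|^2$ (valid precisely because $\lambda r>1$), which is where the hypothesis $\lambda>p/q$ actually enters. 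As written, the core of your argument rests on two false intermediate claims.
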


The following three integral estimates are of great importance in our proof.

 \begin{Lemma}\cite[Lemma 2.5]{of}\label{le3}
	Let  $s>-1$, $r,t>0$ and $r+t-s-2>0$.   		If $r,t<s+2$, then
		$$
		\int_\D\frac{(1-|z|^2)^s}{|1-\bar{a}z|^{r}|1-\bar{b}z|^t}dA(z)\lesssim \frac{1}{|1-\bar{a}b|^{r+t-s-2}}
		$$
		for all $a,b\in \D$.
\end{Lemma}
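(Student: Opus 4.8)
The plan is to deduce this two-point estimate from the classical one-point (Forelli--Rudin) estimate, organized around a decomposition of $\D$ adapted to the two ``poles'' of the kernel. First I would record the two truncated one-point estimates that will serve as the workhorses: for $s>-1$ and $\delta>0$ one has $\int_{\{|1-\bar{a}z|\le\delta\}}\frac{(1-|z|^2)^s}{|1-\bar{a}z|^r}\,dA(z)\lesssim\delta^{s+2-r}$ whenever $r<s+2$, and $\int_{\{|1-\bar{a}z|\ge\delta\}}\frac{(1-|z|^2)^s}{|1-\bar{a}z|^c}\,dA(z)\lesssim\delta^{s+2-c}$ whenever $c>s+2$. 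Both follow by summing the elementary shell estimate $\int_{\{|1-\bar{a}z|\asymp\rho\}}(1-|z|^2)^s\,dA\asymp\rho^{s+2}$ (valid for $s>-1$) over the dyadic annuli $\rho=2^{-k}\delta$ inward, respectively $\rho=2^{k}\delta$ outward, the resulting series being geometric precisely because $r<s+2$ (respectively $c>s+2$). I would also record the triangle-type inequality $|1-\bar{a}b|\le|1-\bar{a}z|+|1-\bar{b}z|$, which is immediate from $1-\bar{a}b=(1-\bar{a}z)+\bar{a}(z-b)$ together with $|z-b|\le|1-\bar{b}z|$ (the latter because $|1-\bar{b}z|^2-|z-b|^2=(1-|z|^2)(1-|b|^2)\ge0$).

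Set $\delta=\tfrac12|1-\bar{a}b|$ and split $\D=\Omega_1\cup\Omega_2\cup\Omega_3$, where $\Omega_1=\{|1-\bar{a}z|\le\delta\}$, $\Omega_2=\{|1-\bar{b}z|\le\delta\}$, and $\Omega_3$ is the remainder; the triangle inequality shows $\Omega_1$ and $\Omega_2$ are disjoint. On $\Omega_1$ the point $z$ sits near the pole of the $a$-kernel, and there $|1-\bar{b}z|\ge|1-\bar{a}b|-|1-\bar{a}z|\ge\delta$, so I can pull out $|1-\bar{b}z|^{-t}\lesssim|1-\bar{a}b|^{-t}$ and am left with $|1-\bar{a}b|^{-t}\int_{\Omega_1}\frac{(1-|z|^2)^s}{|1-\bar{a}z|^r}\,dA(z)$. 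The crucial point is to use the \emph{truncated} convergent estimate here (legitimate since $r<s+2$) rather than the full integral over $\D$: it gives $\int_{\Omega_1}(\cdots)\lesssim\delta^{s+2-r}\asymp|1-\bar{a}b|^{s+2-r}$, whence $\int_{\Omega_1}\lesssim|1-\bar{a}b|^{s+2-r-t}=|1-\bar{a}b|^{-(r+t-s-2)}$, exactly the target. The contribution of $\Omega_2$ is handled identically with $a$ and $b$ interchanged, using $t<s+2$.

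The region $\Omega_3$, where $z$ is bounded away from both poles ($|1-\bar{a}z|\ge\delta$ and $|1-\bar{b}z|\ge\delta$), is the main obstacle: bounding one kernel crudely below by $\delta$ and integrating the other over all of $\D$ loses a full power, and extracting exactly $\rho=r+t-s-2$ lands the surviving kernel on the critical exponent $s+2$ and produces a spurious logarithmic factor. I would avoid this by splitting $\Omega_3$ according to which of $|1-\bar{a}z|,|1-\bar{b}z|$ is larger and extracting a fractional power that stays strictly off criticality. On the part where $|1-\bar{b}z|\ge|1-\bar{a}z|$, fix any $\epsilon\in(0,\,r+t-s-2)$ and set $\rho=r+t-s-2-\epsilon$, so that $0<\rho<t$ and $t-\rho=s+2-r+\epsilon>0$; then $|1-\bar{b}z|^{-t}\le\delta^{-\rho}|1-\bar{b}z|^{-(t-\rho)}\le\delta^{-\rho}|1-\bar{a}z|^{-(t-\rho)}$, so the integrand is at most $\delta^{-\rho}(1-|z|^2)^s|1-\bar{a}z|^{-(r+t-\rho)}$ with $r+t-\rho=s+2+\epsilon>s+2$. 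Applying the truncated \emph{divergent} estimate over $\{|1-\bar{a}z|\ge\delta\}$ produces the factor $\delta^{s+2-(s+2+\epsilon)}=\delta^{-\epsilon}$, so this piece is $\lesssim\delta^{-\rho-\epsilon}=\delta^{-(r+t-s-2)}\asymp|1-\bar{a}b|^{-(r+t-s-2)}$, and the symmetric part is the same. Summing the three contributions gives the asserted bound. The genuinely delicate point is thus the treatment of $\Omega_3$, and specifically the choice of $\rho$ strictly below $r+t-s-2$, which trades the forbidden logarithm at the critical exponent for the compensating power $\delta^{-\epsilon}$ supplied by the truncated tail estimate.
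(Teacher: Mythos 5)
The paper does not prove this lemma at all: it is imported verbatim as \cite[Lemma 2.5]{of} from Ortega--F\`abrega, so there is no in-paper argument to compare against. Judged on its own, your proof is correct and complete. The two truncated one-point estimates are right (the shell bound $\int_{\{|1-\bar a z|\asymp\rho\}}(1-|z|^2)^s\,dA\lesssim\rho^{s+2}$ for $s>-1$ is standard, and the geometric series converge exactly under $r<s+2$, resp.\ $c>s+2$), the triangle inequality $|1-\bar a b|\le|1-\bar a z|+|1-\bar b z|$ is correctly derived, and the contributions of $\Omega_1,\Omega_2$ come out to precisely $|1-\bar a b|^{-(r+t-s-2)}$ using the hypotheses $r<s+2$ and $t<s+2$ respectively. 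You are also right that $\Omega_3$ is where a naive argument breaks: extracting the full power $r+t-s-2$ from the larger kernel leaves the other at the critical exponent $s+2$ and produces a logarithm. Your fix --- splitting by which kernel is larger, extracting $\rho=r+t-s-2-\epsilon$ so the surviving exponent is $s+2+\epsilon$, and recovering the lost $\delta^{-\epsilon}$ from the divergent truncated estimate --- is exactly the standard device, and the arithmetic checks out ($\rho>0$ since $\epsilon<r+t-s-2$, and $t-\rho=s+2-r+\epsilon>0$ since $r<s+2$, with the symmetric inequalities in the other half). This is essentially the classical proof of the two-point Forelli--Rudin estimate, so while it takes a route the paper never spells out, it is the expected one and I see no gap.
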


\begin{Lemma}\cite[Proposition 2.4]{cz}\label{log}
	Let $p\ge0$, $s>-1$ and $c>0$. Then 
	$$
	\int_{\D}\f{(1-|z|^2)^{s}}{|1-\overline{z}w|^{2+s+c}\left(\log\frac{e}{1-|z|^2}\right)^p}dA(z) \lesssim \frac{1}{(1-|w|^2)^c\left(\log\frac{e}{1-|w|^2}\right)^p}
	$$
	for all $w\in \D$.
\end{Lemma}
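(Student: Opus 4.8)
The plan is to reduce everything to the classical log-free Forelli--Rudin estimate
\[
\int_\D \frac{(1-|z|^2)^s}{|1-\bar z w|^{2+s+c}}\,dA(z) \lesssim \frac{1}{(1-|w|^2)^c} \qquad (s>-1,\ c>0),
\]
which is the case $a=b=w$ of Lemma \ref{le3} when $c<s+2$ and holds for every $c>0$ by the standard argument, and then to insert the logarithmic weight by exploiting that $\log\frac{e}{1-|z|^2}$ varies slowly while the log-free integrand concentrates near $z=w$. Writing $\tau=1-|w|^2$, I would split $\D$ into $\{|z|>|w|\}$ and $\{|z|\le|w|\}$.

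On $\{|z|>|w|\}$ the bound is immediate: there $1-|z|^2<\tau$, and since $t\mapsto\log\frac{e}{t}$ is decreasing we get $\log\frac{e}{1-|z|^2}\ge\log\frac{e}{\tau}$, whence $(\log\frac{e}{1-|z|^2})^{-p}\le(\log\frac{e}{\tau})^{-p}$ for $p\ge0$. Pulling out this constant and applying the log-free estimate over the subregion yields a contribution $\lesssim \tau^{-c}(\log\frac{e}{\tau})^{-p}$, exactly the desired size.

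The hard part will be $\{|z|\le|w|\}$, where $\log\frac{e}{1-|z|^2}\le\log\frac{e}{\tau}$, so monotonicity alone loses the entire logarithmic gain and only returns $\tau^{-c}$. Here I would decompose dyadically by the size of $1-|z|^2$, setting $A_k=\{z:2^k\tau\le 1-|z|^2<2^{k+1}\tau\}$ for $0\le k\le K$, where $2^K\tau\asymp1$ and hence $K\asymp\log\frac1\tau$. On $A_k$ one has $1-|z|^2\asymp 2^k\tau$, and since $k\ge0$ also $1-|z||w|\asymp 2^k\tau$; integrating first in $\arg z$ via $\int_0^{2\pi}|1-re^{i\theta}|^{-\gamma}\,d\theta\asymp(1-r)^{1-\gamma}$ with $\gamma=2+s+c>1$, and then radially over a band of width $\asymp 2^k\tau$, shows the log-free integral over $A_k$ is $\asymp(2^k\tau)^{-c}$. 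As $\log\frac{e}{1-|z|^2}\asymp\log\frac{e}{2^k\tau}$ throughout $A_k$, the contribution of $A_k$ is $\asymp(2^k\tau)^{-c}(\log\frac{e}{2^k\tau})^{-p}$.

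It then remains to show $\sum_{k=0}^{K}(2^k\tau)^{-c}(\log\frac{e}{2^k\tau})^{-p}=\tau^{-c}\sum_{k=0}^{K}2^{-kc}(\log\frac{e}{2^k\tau})^{-p}\lesssim \tau^{-c}(\log\frac{e}{\tau})^{-p}$. For $k\le K/2$ one has $2^k\tau\lesssim\tau^{1/2}$, so $\log\frac{e}{2^k\tau}\gtrsim\log\frac{e}{\tau}$ and those terms sum geometrically in $2^{-kc}$ to $\lesssim(\log\frac{e}{\tau})^{-p}$; for $k>K/2$ one uses $2^{-kc}\lesssim\tau^{c/2}$ and $\log\frac{e}{2^k\tau}\gtrsim1$ together with the elementary boundedness of $\tau^{c/2}(\log\frac{e}{\tau})^{1+p}$ on $(0,1]$ to absorb the $O(\log\frac1\tau)$ terms. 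This summation, where the geometric decay $2^{-kc}$ must be balanced against the slow growth of the inverse logarithm, is the only genuinely delicate point; the rest is the standard Forelli--Rudin machinery.
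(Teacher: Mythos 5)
The paper does not actually prove this lemma---it is quoted from \cite[Proposition 2.4]{cz} and used as a black box---so your argument has to stand on its own, and it does. The split into $\{|z|>|w|\}$ (where $1-|z|^2<\tau$, monotonicity of $t\mapsto\log\frac{e}{t}$ lets you pull $(\log\frac{e}{\tau})^{-p}$ out, and the log-free Forelli--Rudin estimate finishes) and the dyadic annuli $A_k=\{2^k\tau\le 1-|z|^2<2^{k+1}\tau\}$ on $\{|z|\le|w|\}$ is sound: on $A_k$ one indeed has $1-|z||w|\asymp 2^k\tau$ for $k\ge 0$, the angular integral contributes $(2^k\tau)^{-(1+s+c)}$ because $2+s+c>1$, and the radial band has measure $\asymp 2^k\tau$, so each annulus contributes $\lesssim (2^k\tau)^{-c}\bigl(\log\frac{e}{2^k\tau}\bigr)^{-p}$; the final summation, split at $k=K/2$, is handled correctly (for $k>K/2$ you could even skip counting the $O(\log\frac{1}{\tau})$ terms and just sum the geometric series $\sum 2^{-kc}\lesssim \tau^{c/2}\lesssim(\log\frac{e}{\tau})^{-p}$). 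The only detail worth writing out explicitly is the equivalence $\log\frac{e}{1-|z|^2}\asymp\log\frac{e}{2^k\tau}$ on the top few annuli where $2^k\tau$ is of order $1$: there $\log\frac{e}{2^{k+1}\tau}$ may be small or negative, but since $1-|z|^2<1$ both quantities are bounded above and below by absolute constants, so the comparison survives with a uniform constant. This dyadic-annuli argument is the standard route to logarithmic Forelli--Rudin estimates and is, in substance, what the cited source does; within this paper your proof fills a genuine gap rather than duplicating anything.
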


\begin{Lemma}\cite[Lemma 3]{hl}\label{llog}
	Let $p\ge0$, $s>-1$, $r,t>0$ and $r+t-s-2>0$, $r<s+2<t$. Then 
	$$
	\int_{\D}\frac{(1-|z|^2)^{s}}{|1-\ol{a}z|^{r}|1-z\ol{w}|^{t}\left( \l \right)^p}dA(z) \lesssim \frac{1}{|1-\ol{a}w|^{r}(1-|w|^2)^{t-s-2}\left( \log\frac{e}{1-|w|^2} \right)^p} 
	$$
	for all $a,w \in \D$.
\end{Lemma}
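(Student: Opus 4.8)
The plan is to prove the estimate by splitting $\D$ according to which of the two singular factors dominates, and then reducing each piece either to the one–point logarithmic estimate of Lemma \ref{log} or to an elementary Carleson–box computation. The two facts I would use throughout are the pseudo–hyperbolic bound $|a-w|\le|1-\ol a w|$ (valid for $a,w\in\D$) together with the two triangle inequalities it yields,
$$|1-\ol a w|\le|1-\ol a z|+|1-z\ol w|\quad\text{and}\quad|1-z\ol w|\le|1-\ol a z|+|1-\ol a w|.$$
Writing $\rho=|1-\ol a w|$, I split $\D=E_1\cup E_2$ with $E_1=\{z:|1-\ol a z|\ge\frac12\rho\}$ and $E_2=\D\setminus E_1$.

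On $E_1$ the factor $|1-\ol a z|^{-r}$ is harmless: since $|1-\ol a z|\ge\frac12\rho$ I pull it out as $|1-\ol a z|^{-r}\le 2^r\rho^{-r}$ and am left with $\rho^{-r}\int_\D (1-|z|^2)^s|1-z\ol w|^{-t}(\l)^{-p}\,dA(z)$. Because $t>s+2$, I write $t=2+s+c$ with $c=t-s-2>0$ and apply Lemma \ref{log} directly (noting $|1-z\ol w|=|1-\ol z w|$); this produces exactly $\rho^{-r}(1-|w|^2)^{-(t-s-2)}(\log\frac{e}{1-|w|^2})^{-p}$, i.e. the desired right–hand side. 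This region is therefore immediate once Lemma \ref{log} is in hand.

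The region $E_2$ carries all the difficulty. Here the first triangle inequality gives $|1-z\ol w|>\frac12\rho$ while the second gives $|1-z\ol w|<\frac32\rho$, so $|1-z\ol w|\asymp\rho$ and the $w$–singularity is no longer active; pulling it out contributes a clean factor $\rho^{-t}$. The crucial geometric observation is that on $E_2$ one has $1-|z|^2\le 2|1-\ol a z|<\rho$, because $|1-\ol a z|\ge 1-|z|\ge\frac12(1-|z|^2)$. Consequently $\log\frac{e}{1-|z|^2}\ge\log\frac{e}{\rho}>0$ on $E_2$, which lets me extract the logarithmic weight as $(\l)^{-p}\le(\log\frac{e}{\rho})^{-p}$. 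What remains is the Carleson–box integral $\int_{|1-\ol a z|<\rho/2}(1-|z|^2)^s|1-\ol a z|^{-r}\,dA(z)$, which by the standard scaling estimate is $\asymp\rho^{s+2-r}$ (using $s>-1$ and $r<s+2$, so $s+2-r>0$). Collecting the factors gives the bound $\rho^{-(t+r-s-2)}(\log\frac{e}{\rho})^{-p}=\rho^{-r}\,\rho^{-(t-s-2)}(\log\frac{e}{\rho})^{-p}$ on $E_2$.

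It then remains to compare this with the target. Since $\rho\ge\frac12(1-|w|^2)$, the final step is the elementary inequality
$$(1-|w|^2)^{t-s-2}\Big(\log\tfrac{e}{1-|w|^2}\Big)^p\lesssim\rho^{t-s-2}\Big(\log\tfrac{e}{\rho}\Big)^p\qquad\Big(\tfrac12(1-|w|^2)\le\rho\le 2\Big),$$
which holds because $h(\rho)=\rho^{t-s-2}(\log\frac{e}{\rho})^p$ is increasing for small $\rho$ (as $t-s-2>0$), so its infimum over $\rho\ge\frac12(1-|w|^2)$ is comparable to its value at the left endpoint. This converts $\rho^{-(t-s-2)}(\log\frac{e}{\rho})^{-p}$ into $(1-|w|^2)^{-(t-s-2)}(\log\frac{e}{1-|w|^2})^{-p}$ and finishes $E_2$. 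I expect this coordination of the logarithmic weight on $E_2$—where the weight is naturally measured at $z$ (hence near $a$) but must be reattached to $w$—to be the main obstacle; the guiding principle is that any positive power of $1-|w|^2$ beats any power of the logarithm, which is precisely what the monotonicity of $h$ encodes.
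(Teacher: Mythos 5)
The paper does not prove this lemma at all; it is quoted from Hu and Li \cite{hl} with only a citation, so there is no in-paper argument to compare yours against. Judged on its own, your proof is correct and essentially self-contained modulo Lemma \ref{log} and the standard Carleson-box estimate. The splitting at $|1-\ol{a}z|=\frac{1}{2}|1-\ol{a}w|$, the comparability $|1-z\ol{w}|\asymp|1-\ol{a}w|$ on the inner region (both triangle inequalities you invoke do follow from $|z-w|^2=|1-z\ol{w}|^2-(1-|z|^2)(1-|w|^2)\le|1-z\ol{w}|^2$), the observation $1-|z|^2\le 2|1-\ol{a}z|$ that lets you transfer the logarithmic weight from $z$ to $\rho=|1-\ol{a}w|$, and the final monotonicity step are all sound, and every hypothesis is used where it should be: $s>-1$ and $r<s+2$ for the box estimate, $t>s+2$ for Lemma \ref{log} on the outer region and for the last comparison, $p\ge0$ throughout (the condition $r+t-s-2>0$ is then automatic). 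Two minor points of rigor, neither of which affects validity: the Carleson-box integral is only $\lesssim\rho^{s+2-r}$ in general (the set $\{|1-\ol{a}z|<\rho/2\}$ may be empty), but an upper bound is all you need; and in the final step the infimum of $h(\rho)=\rho^{t-s-2}\left(\log\frac{e}{\rho}\right)^{p}$ over $\left[\frac{1}{2}(1-|w|^2),2\right]$ may be attained at the right endpoint when $p$ is large relative to $t-s-2$, rather than at the left endpoint as you assert, but there $h(2)$ is a positive constant while $h(1-|w|^2)$ is bounded above by a constant depending only on $p$ and $t-s-2$, so the needed inequality $h(1-|w|^2)\lesssim h(\rho)$ still holds.
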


 To clearly state and prove our main result, we  introduce a notation. Let $f\in H(\D)$ and $\epsilon>0$. Define the level set
$$
\Omega_{\epsilon}(f)=\left\{z\in \D: |f'(z)|(1-|z|^2)^\b\left(\log\frac{e}{1-|z|^2}\right)^\gamma\geq \epsilon\right\}.
$$
Now we are in a position to state and prove our main result.

\begin{Theorem}\label{th1}
Let $0<p,q,\b<\infty$, $\gamma\ge 0$, $\a>-2$. Then the following statements hold.
\noindent \begin{enumerate}
\item [(i)] If $\b<\frac{\a+2}{p}$, then $\mathcal{C}_{\Blog}(DT_p^q(\alpha)\cap\Blog)=\Blog$.
		
\item [(ii)] If $\b> \frac{\a+2}{p}+\frac{1}{q}$, then $\mathcal{C}_{\Blog}(DT_p^q(\alpha)\cap\Blog)=\Blogo$.
		
\item [(iii)]
If  $\frac{\a+2}{p}\le\b\le\frac{\a+2}{p}+\frac{1}{q}$, $1\le p<\a+3$ and $f\in\Blog$,  then $f\in\dt$ if and only if for any $\epsilon>0$,
\begin{align}\label{11}
	\int_{\T}\left(\int_{\gaon}\frac{(1-|z|^2)^{\a-p\beta}}{\left(\log\frac{e}{1-|z|^2}\right)^{p\gamma}} dA(z)
	\right)^{\frac{q}{p}}|d\eta|<\infty.
\end{align}
If  $\frac{\a+2}{p}\le\b\le\frac{\a+2}{p}+\frac{1}{q}$, $0<p<1$ and the function $|f'(z)|(1-|z|^2)^\b \left(\l\right)^\gamma$ is uniformly continuous with respect to the Bergman metric on $\D$, then $f\in\dt$ if and only if $(\ref{11})$ holds for any $\epsilon>0$.		
		
\end{enumerate}
\end{Theorem}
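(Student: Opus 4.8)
The plan is to compare, in each regime, the Bloch-type size of $f'$ with its tent-space size, using that for $z\in\Gamma(\eta)$ one has $|1-z\ol\eta|=|z-\eta|$ and hence $\tfrac12(1-|z|^2)\le|1-z\ol\eta|<\zeta(1-|z|^2)$. For (i) I would prove the stronger inclusion $\Blog\subseteq DT_p^q(\a)$. Indeed, if $f\in\Blog$ then $|f'(z)|^p(1-|z|^2)^\a\le\|f\|_{\Blog}^p(1-|z|^2)^{\a-p\b}\big(\l\big)^{-p\gamma}$; since $\b<\frac{\a+2}{p}$ forces $\a-p\b>-2$ and the logarithmic factor is harmless, the inner integral $\int_{\Gamma(\eta)}(1-|z|^2)^{\a-p\b}(\l)^{-p\gamma}\,dA(z)$ is finite and bounded uniformly in $\eta$ (recall $\int_{\Gamma(\eta)}(1-|z|^2)^s\,dA<\infty$ iff $s>-2$). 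The outer integral over $\T$ is then finite, giving $DT_p^q(\a)\cap\Blog=\Blog$, whence the closure is $\Blog$.

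For (ii) I would establish the two inclusions separately. To see $\dt\subseteq\Blogo$, I would use the subharmonicity of $|f'|^p$ to get $|f'(a)|^p(1-|a|^2)^{\a+2}\lesssim\int_{D(a)}|f'(z)|^p(1-|z|^2)^\a\,dA(z)$ on a fixed Bergman disk $D(a)$; taking $\eta$ in the boundary arc $I_a$ of length $\asymp 1-|a|$ for which $D(a)\subseteq\Gamma(\eta)$ and integrating yields $|f'(a)|(1-|a|^2)^{\frac{\a+2}{p}+\frac1q}\lesssim\|f\|_{DT_p^q(\a)}$. As $\b>\frac{\a+2}{p}+\frac1q$, multiplying by the leftover positive power of $1-|a|^2$ times the logarithm drives $|f'(a)|(1-|a|^2)^\b(\l)^\gamma\to0$, so $DT_p^q(\a)\cap\Blog\subseteq\Blogo$ and, $\Blogo$ being closed, the closure lies in $\Blogo$. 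For the reverse inclusion $\Blogo\subseteq\dt$ I would approximate $f\in\Blogo$ by its dilations $f_r(z)=f(rz)$: each $f_r$ has $f_r'$ bounded on $\ol\D$ and so lies in $DT_p^q(\a)\cap\Blog$, while $\|f_r-f\|_{\Blog}\to0$ by the standard little-oh dilation argument (split at $|z|=\rho$ and use the uniform bound $(1-|z|^2)^\b(\l)^\gamma\lesssim(1-|rz|^2)^\b\big(\log\frac{e}{1-|rz|^2}\big)^\gamma$, which follows from $t^{-\b}(1+\log t)^\gamma\le C(\b,\gamma)$ for $t\ge1$).

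For (iii) the necessity is immediate: choosing $g\in DT_p^q(\a)\cap\Blog$ with $\|f-g\|_{\Blog}<\epsilon/2$, on $\on$ we get $|g'(z)|(1-|z|^2)^\b(\l)^\gamma\ge\epsilon/2$, hence $(1-|z|^2)^{\a-p\b}(\l)^{-p\gamma}\le(2/\epsilon)^p|g'(z)|^p(1-|z|^2)^\a$ there; integrating over $\gaon$, raising to the power $q/p$ and integrating over $\T$ bounds the left-hand side of $(\ref{11})$ by $(2/\epsilon)^q\|g\|_{DT_p^q(\a)}^q<\infty$. For the sufficiency I would fix $\epsilon>0$, choose $s>\b-1$ (so that $f'\in A^1_s$ and the reproducing formula is valid), and split $f'(z)=c_s\int_\D\frac{f'(w)(1-|w|^2)^s}{(1-\ol wz)^{2+s}}\,dA(w)$ over $\on$ and its complement, letting $g_\epsilon'$ be the piece integrated over $\on$. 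On $\D\setminus\on$ one has $|f'(w)|<\epsilon(1-|w|^2)^{-\b}\big(\log\frac{e}{1-|w|^2}\big)^{-\gamma}$, so Lemma~\ref{log} (with parameters $s-\b>-1$, $c=\b>0$ and exponent $\gamma$) gives $|f'(z)-g_\epsilon'(z)|(1-|z|^2)^\b(\l)^\gamma\lesssim\epsilon$, that is $\|f-g_\epsilon\|_{\Blog}\lesssim\epsilon$.

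The remaining and hardest point is $g_\epsilon\in DT_p^q(\a)$, which is where $(\ref{11})$ enters. I would apply Lemma~\ref{le1} to $d\nu=|g_\epsilon'|^p(1-|z|^2)^\a\,dA$ to replace the tent integral by $\int_\T\big(\int_\D(\frac{1-|z|^2}{|1-z\ol\eta|})^\lambda\,d\nu\big)^{q/p}|d\eta|$, bound $|g_\epsilon'(z)|$ by the kernel integral of $|f'|$ over $\on$, and use $|f'(w)|^p(1-|w|^2)^\a\le\|f\|_{\Blog}^p(1-|w|^2)^{\a-p\b}\big(\log\frac{e}{1-|w|^2}\big)^{-p\gamma}$ on $\on$ to reduce the whole quantity to the left-hand side of $(\ref{11})$. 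For $1\le p<\a+3$ I would carry out a Schur/Minkowski estimate, the double kernel integrals being exactly of the form controlled by Lemmas~\ref{le3} and~\ref{llog}; the parameter range $1\le p<\a+3$ is precisely what makes the exponents produced satisfy the $s>-1$ and $r<s+2<t$ hypotheses of those lemmas. For $0<p<1$ the Minkowski step is unavailable, so I would instead discretize $\on$ over a Bergman lattice and invoke $p$-subadditivity; the uniform continuity of $|f'(z)|(1-|z|^2)^\b(\l)^\gamma$ in the Bergman metric is exactly what makes $|f'|$ comparable across each lattice cell, so that the discretized sum is comparable to the level-set integral in $(\ref{11})$. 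The main obstacle is thus the clean interchange of the kernel integral defining $g_\epsilon'$ with the Lemma~\ref{le1} integration and the verification that the resulting iterated integrals obey the hypotheses of Lemmas~\ref{le3}--\ref{llog}.
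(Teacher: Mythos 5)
Your proposal is correct and follows essentially the same route as the paper: part (i) via the uniform bound on $\int_{\Gamma(\eta)}(1-|z|^2)^{\a-p\b}\,dA(z)$, part (iii) necessity via $\Omega_\epsilon(f)\subseteq\Omega_{\epsilon/2}(g)$, and part (iii) sufficiency by splitting the Bergman reproducing integral of $f'$ over $\on$ and its complement, using Lemma~\ref{log} for the $\Blog$-distance, Lemma~\ref{le1} together with the Forelli--Rudin estimates for membership in $DT_p^q(\a)$, and a lattice/subharmonicity argument with $p$-subadditivity when $0<p<1$. The only divergence is in (ii), where you prove the pointwise growth estimate by subharmonicity and use dilations for the reverse inclusion, whereas the paper cites this estimate from \cite{yhl} and uses density of polynomials; both are standard and equivalent in substance.
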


\begin{proof}
{\it (i)} Let $\b<\frac{\a+2}{p}$. For  $f\in\Blog$, we get
\begin{align*}
	\|f\|^q_{DT_p^q(\a)}\asymp&\int_{\T}\left(\int_{\Gamma(\eta)} |f'(z)|^p(1-|z|^2)^{\a} dA(z) \right)^\frac{q}{p}|d\eta|\\
	\lesssim&\|f\|^q_{\Blog}\int_{\T}\left(\int_{\Gamma(\eta)}\frac{(1-|z|^2)^{\a-p\b}}{\left(\log\frac{e}{1-|z|^2} \right)^{p\gamma}}dA(z) \right)^{\frac{q}{p}}|d\eta|\\
	\lesssim&\|f\|^q_{\Blog}\int_{\T}\left(\int_{\Gamma(\eta)}(1-|z|^2)^{\a-p\b}dA(z) \right)^{\frac{q}{p}}|d\eta|\\
	\lesssim &\|f\|^q_{\Blog}.
\end{align*}
This yields that $\Blog\subset DT_p^q(\a)$. Consequently,   $\mathcal{C}_{\Blog}(DT_p^q(\alpha)\cap\Blog)=\Blog$.

{\it (ii)} Let $\b> \frac{\a+2}{p}+\frac{1}{q}$.
By \cite[Lemma 2.6]{yhl},  
for $f\in DT_p^q(\a)$, we obtain
$$
|f'(z)|\lesssim\frac{\|f\|_{DT_p^q(\a)}}{(1-|z|^2)^{\frac{\a+2}{p}+\frac{1}{q}}},\quad z\in\D.
$$ 
 Since polynomials are  dense  in $DT_p^q(\a)$.
Therefore, $\lim_{|z|\to1}(1-|z|^2)^{\frac{\a+2}{p}+\frac{1}{q}}|f'(z)|=0.$  Hence,
$DT_p^q(\a)\subset \mathcal{B}_0^{\frac{\a+2}{p}+\frac{1}{q}}\subset\Blogo,$ which implies 
$$
\mathcal{C}_{\Blog}(DT_p^q(\alpha)\cap\Blog)=C_{\Blog}(DT_p^q(\a))\subset\Blogo.
$$
Since $\Blogo$ is the closure of polynomials in $\Blog$, it follows that $$\Blogo\subseteq C_{\Blog}(DT_p^q(\a)).$$
 Therefore, we conclude that $\mathcal{C}_{\Blog}(DT_p^q(\alpha)\cap\Blog)=\Blogo$.

{\it (iii)} {\bf Necessity.}
Suppose that $f\in \dt$. 
Then  for any $\epsilon > 0$, there exists a function $g \in DT_p^q(\alpha) \cap \Blog$ such that  $\|f - g\|_{\Blog} \leq \frac{\epsilon}{2}$. Observing that
	\begin{align*}
	& (1-|z|^2)^\b|f'(z)|\left(\l\right)^\gamma\\
	\leq& (1-|z|^2)^\b|g'(z)|\left(\l\right)^\gamma+ (1-|z|^2)^\b|f'(z)-g'(z)|\left(\l\right)^\gamma, \ \ z\in \D,
	\end{align*}
	we have $\Omega_{\epsilon}(f) \subseteq \Omega_{\frac{\epsilon}{2}}(g)$. 
	Hence,
	\begin{align*}
	\infty>&\int_{\T}\left( \int_{\Gamma(\eta)}|g'(z)|^p(1-|z|^2)^{\a} dA(z) \right)^{\frac{q}{p}}|d\eta|\\
	\ge& \int_{\T}\left( \int_{\Gamma(\eta)\cap\Omega_{\frac{\epsilon}{2}}(g)}|g'(z)|^p(1-|z|^2)^{\a} dA(z) \right)^{\frac{q}{p}}|d\eta|\\
	=&\int_{\T}\left( \int_{\Gamma(\eta)\cap\Omega_{\frac{\epsilon}{2}}(g)}|g'(z)|^p(1-|z|^2)^{p\beta}\left(\l\right)^{p\gamma}
	\frac{(1-|z|^2)^{\a-p\beta}}{\left(\log\frac{e}{1-|z|^2}\right)^{p\gamma}} dA(z)\right)^{\frac{q}{p}}|d\eta|\\
	\ge&\left(\frac{\epsilon}{2}\right)^q \int_{\T}\left( \int_{\Gamma(\eta)\cap\Omega_{\frac{\epsilon}{2}}(g) }\frac{(1-|z|^2)^{\a-p\beta}}{\left(\log\frac{e}{1-|z|^2}\right)^{p\gamma}} dA(z) \right)^{\frac{q}{p}}|d\eta|\\
	\ge& \left(\frac{\epsilon}{2}\right)^q \int_{\T}\left(\int_{\gaon}\frac{(1-|z|^2)^{\a-p\beta}}{\left(\log\frac{e}{1-|z|^2}\right)^{p\gamma}} dA(z) \right)^{\frac{q}{p}}|d\eta|,
	\end{align*}
    which implies the desired result.
	
	{\bf Sufficiency.}
	Suppose that $(\ref{11})$ holds. Without loss of generality, we may assume that $f(0)=0$.   
 Choose $\delta>0$ large enough.  For any $z\in \D$, by  \cite[Proposition 4.27]{z1} we obtain
	$$
	f(z)= \int_{\D}\f{f'(w)(1-|w|^2)^{1+\delta}}{\ol{w}(1-z\overline{w})^{2+\delta}}dA(w).
	$$
	Write $f(z)=f_1(z)+f_2(z)$, where
	$$
	f_1(z)= \int_{\Omega_{\epsilon}(f)}\f{f'(w)(1-|w|^2)^{1+\delta}}{\ol{w}(1-z\overline{w})^{2+\delta}}dA(w)
	$$
	and
	$$
	f_2(z)= \int_{\D\backslash\Omega_{\epsilon}(f)}\f{f'(w)(1-|w|^2)^{1+\delta}}{\ol{w}(1-z\overline{w})^{2+\delta}}dA(w).
	$$
	By calculation, we get
	$$
	f_1'(z)= (\delta+2) \int_{\Omega_{\epsilon}(f)}\f{f'(w)(1-|w|^2)^{1+\delta}}{(1-z\overline{w})^{3+\delta}}dA(w)
	$$
	and
	$$
	f_2'(z)= (\delta+2) \int_{\D\backslash\Omega_{\epsilon}(f)}\f{f'(w)(1-|w|^2)^{1+\delta}}{(1-z\overline{w})^{3+\delta}}dA(w).
	$$
	Let $g(z)=f_1(z)-f_1(0)$. Then $g(0)=0$. 
	Using Lemma \ref{log}, we have
	\begin{align*}
	&\|f-g\|_{\Blog}	\asymp\sup_{z\in\D}(1-|z|^2)^\b|f_2'(z)|\left(\l\right)^\gamma\\
	\lesssim& \sup_{z\in\D} (1-|z|^2)^\b\left(\l\right)^\gamma \int_{\D\backslash\Omega_{\epsilon}(f)}\f{|f'(w)|(1-|w|^2)^{1+\delta}}{|1-z\overline{w}|^{3+\delta}}dA(w)\\
	\lesssim&\sup_{z \in \D}(1-|z|^2)^\b\left(\l\right)^\gamma \int_{\D\backslash\Omega_{\epsilon}(f)} \f{|f'(w)|(1-|w|^2)^\b\left(\log\frac{e}{1-|w|^2}\right)^\gamma(1-|w|^2)^{1+\delta-\b}}{|1-z\overline{w}|^{3+\delta}\left(\log\frac{e}{1-|w|^2}\right)^\gamma}dA(w)\\
	\lesssim& \epsilon\sup_{z \in \D}(1-|z|^2)^\b\left(\l\right)^\gamma \int_{\D}\f{(1-|w|^2)^{1+\delta-\beta}}{|1-z\overline{w}|^{3+\delta}\left(\log\frac{e}{1-|w|^2}\right)^\gamma}dA(w)\\
	\lesssim& \epsilon.
	\end{align*}
	Hence $g\in \Blog$.
	To complete the proof, it is only necessary to show that $g\in DT_p^q(\a)$.
	Since $g(z)=f_1(z)-f_1(0)$, we obtain
	\begin{equation}\label{2.4}
	\begin{aligned}
	|g'(z)|^p=|f_1'(z)|^p
	\lesssim\left(
	\int_{\Omega_{\epsilon}(f)}\f{|f'(w)|(1-|w|^2)^{1+\delta}}{|1-z\overline{w}|^{3+\delta}}dA(w) \right)^p.
	\end{aligned}	
	\end{equation}
Now, we  divide the remaining proof into two cases.

{\bf Case $1\le p<\a+3$. }   When $p=1$, it is clear that
\begin{align*}
	|g'(z)|
	\lesssim&\|f\|_{\Blog}
	\int_{\Omega_{\epsilon}(f)}\f{(1-|w|^2)^{1+\delta-\b}}{|1-z\overline{w}|^{3+\delta}\left(\log\frac{e}{1-|w|^2} \right)^\gamma}dA(w).
\end{align*}
When $1<p<\a+3$, using H\"older's inequality and Lemma \ref{log},
    \begin{align*}
    &|g'(z)|^p    \lesssim\left(
    \int_{\Omega_{\epsilon}(f)}\f{|f'(w)|^p(1-|w|^2)^{p+\delta}}{|1-z\overline{w}|^{3+\delta}}dA(w) \right)
    \left(\int_{\Omega_{\epsilon}(f)}\f{(1-|w|^2)^{\delta}}{|1-z\overline{w}|^{3+\delta}}dA(w) \right)^{p-1}\\
    \lesssim&\|f\|_{\Blog}^p\left(
    \int_{\Omega_{\epsilon}(f)}\f{(1-|w|^2)^{p+\delta-p\b}}{|1-z\overline{w}|^{3+\delta}\left(\log\frac{e}{1-|w|^2} \right)^{p\gamma}}dA(w) \right)\left(
    \int_{\Omega_{\epsilon}(f)}\f{(1-|w|^2)^{\delta}}{|1-z\overline{w}|^{3+\delta}}dA(w) \right)^{p-1}\\
    \lesssim&\|f\|_{\Blog}^p\left(
    \int_{\Omega_{\epsilon}(f)}\f{(1-|w|^2)^{p+\delta-p\b}}{|1-z\overline{w}|^{3+\delta}\left(\log\frac{e}{1-|w|^2} \right)^{p\gamma}}dA(w) \right)\left(
    \int_{\D}\f{(1-|w|^2)^{\delta}}{|1-z\overline{w}|^{3+\delta}}dA(w) \right)^{p-1}\\
    \lesssim&\frac{\|f\|_{\Blog}^p}{(1-|z|^2)^{p-1}}\left(
    \int_{\Omega_{\epsilon}(f)}\f{(1-|w|^2)^{p+\delta-p\b}}{|1-z\overline{w}|^{3+\delta}\left(\log\frac{e}{1-|w|^2} \right)^{p\gamma}}dA(w) \right).
    \end{align*}
	Then, applying Fubini's theorem, it follows that
	\begin{align*}
	&\|g\|_{DT_p^q(\a)}^q	\asymp \int_{\T}\left( \int_{\Gamma(\eta)}|g'(z)|^p(1-|z|^2)^{\a} dA(z) \right)^{\frac{q}{p}}|d\eta|\\
	\lesssim&\int_{\T}\left( \int_{\Gamma(\eta)}
	\frac{\|f\|_{\Blog}^p(1-|z|^2)^{\a}}{(1-|z|^2)^{p-1}}\left(
	\int_{\Omega_{\epsilon}(f)}\f{(1-|w|^2)^{p+\delta-p\b}}{|1-z\overline{w}|^{3+\delta}\left(\log\frac{e}{1-|w|^2} \right)^{p\gamma}}dA(w) \right)
    dA(z) \right)^{\frac{q}{p}}|d\eta|\\
	\lesssim&\|f\|_{\Blog}^q\int_{\T}\left( \int_{\Omega_{\epsilon}(f)}
	\frac{(1-|w|^2)^{p+\delta-p\beta}}{\left(\log\frac{e}{1-|w|^2} \right)^{p\gamma}}
	\left(
	\int_{\Gamma(\eta)}\f{dA(z)}{(1-|z|^2)^{p-1-\a}|1-z\overline{w}|^{3+\delta}} \right)
	dA(w) \right)^{\frac{q}{p}}|d\eta|.
	\end{align*}
	Note that for $z\in\Gamma(\eta), |1-\ol{\eta}z|\asymp1-|z|^2$. Hence, for any $s>\delta+1$ and $\a+3>p$, using Lemma \ref{le3}, we obtain
	\begin{align*}
	 \int_{\Gamma(\eta)}\f{dA(z)}{(1-|z|^2)^{p-1-\a}|1-z\overline{w}|^{3+\delta}} 
	\lesssim&\int_{\D}\f{(1-|z|^2)^{s}}{|1-z\overline{w}|^{3+\delta}|1-\ol{\eta}z|^{p-1-\a+s}}dA(z)\\
	\lesssim&\frac{1}{|1-\ol{\eta}w|^{\delta+p-\a}}.
	\end{align*}
	Hence, using Lemma \ref{le1}, we get
	\begin{equation} \label{p1}
	\begin{aligned}
	 \|g\|_{DT_p^q(\a)}^q	\lesssim&\|f\|_{\Blog}^q\int_{\T}\left(\int_{\Omega_{\epsilon}(f)}\frac{(1-|w|^2)^{p+\delta-p\beta}}{|1-\ol{\eta}w|^{\delta+p-\a}\left(\log\frac{e}{1-|w|^2} \right)^{p\gamma} }
	dA(w)\right)^{\frac{q}{p}}|d\eta|\\
	=&\|f\|_{\Blog}^q\int_{\T}\left(\int_{\Omega_{\epsilon}(f)}\left( \frac{1-|w|^2}{|1-\ol{\eta}w|}  \right)^{\delta+p-\a}\frac{(1-|w|^2)^{\a-p\b}}{\left(\log\frac{e}{1-|w|^2} \right)^{p\gamma} } dA(w)  \right)^{\frac{q}{p}}|d\eta|	\\
	=&\|f\|_{\Blog}^q\int_{\T}\left(\int_{\D}\left( \frac{1-|w|^2}{|1-\ol{\eta}w|}  \right)^{\delta+p-\a}\frac{(1-|w|^2)^{\a-p\b}\chi_{\Omega_{\epsilon}(f)}(w)}{\left(\log\frac{e}{1-|w|^2} \right)^{p\gamma}} dA(w)
	\right)^{\frac{q}{p}}|d\eta|\\
	\lesssim&\|f\|_{\Blog}^q\int_{\T}\left(\int_{\Gamma(\eta)}\frac{(1-|w|^2)^{\a-p\b}\chi_{\Omega_{\epsilon}(f)}(w)}{\left(\log\frac{e}{1-|w|^2} \right)^{p\gamma}} dA(w) \right)^{\frac{q}{p}}|d\eta|\\
	\lesssim&\|f\|_{\Blog}^q\int_{\T}\left(\int_{\gaon}\frac{(1-|w|^2)^{\a-p\b}}{\left(\log\frac{e}{1-|w|^2} \right)^{p\gamma}}dA(w) \right)^{\frac{q}{p}}|d\eta|\\
	<&\infty.
	\end{aligned}
\end{equation}
Hence, $g\in DT_p^q(\a)$. Therefore, for any $\epsilon>0$, there exists a function $g\in DT_p^q(\a)\cap \Blog $ such that $\|f-g\|_{\Blog}\lesssim \epsilon$, i.e., $f\in \dt$. 

    {\bf Case $0<p<1$.} 
    Since the function $|f'(z)|(1-|z|^2)^\beta\left(\l\right) ^{\gamma}$ is uniformly continuous with respect to the Bergman metric on $\D$, there exists $\rho\in(0,1)$ such that for any $z,w\in\D$ with $\b(z,w)<3\rho$,
    $$
    \left| |f'(z)|(1-|z|^2)^\b\left(\l\right) ^{\gamma}-|f'(w)|(1-|w|^2)^\b \left(\log\frac{e}{1-|w|^2}\right) ^\gamma \right|<\frac{\epsilon}{2}.
    $$ 
    Let $\{a_j\}$ be an $(r,\kappa)$ lattice.
    Let $
    \mathcal{M}=\left\{j:D(a_j,\rho)\cap\on\neq\emptyset\right\}.$   It is obvious that
    $
    \bigcup_{j\in\mathcal{M}}D(a_j,2\rho)\subset\Omega_{\frac{\epsilon}{2}}(f).
    $
    Hence, using $(\ref{2.4})$ and subharmonic property of $|f'|$, it follows that
    \begin{align*}
    |g'(z)|^p
    &\lesssim\left(
    \int_{\Omega_{\epsilon}(f)}\f{|f'(w)|(1-|w|^2)^{1+\delta}}{|1-z\overline{w}|^{3+\delta}}dA(w) \right)^p\\
    &\le\left(\sum_{j\in\mathcal{M}}
    \int_{D(a_j,\rho)}\f{|f'(w)|(1-|w|^2)^{1+\delta}}{|1-z\overline{w}|^{3+\delta}}dA(w) \right)^p\\
    &\lesssim\sum_{j\in\mathcal{M}}\frac{(1-|a_j|^2)^{p+p\delta}}{|1-\ol{a_j}z|^{p(3+\delta)}}\left( \int_{D(a_j,\rho)}|f'(w)|dA(w)  \right)^p \\
    &\lesssim\sum_{j\in\mathcal{M}}\frac{(1-|a_j|^2)^{p+p\delta+2p-2}}{|1-\ol{a_j}z|^{p(3+\delta)}} \int_{D(a_j,2\rho)}|f'(w)|^pdA(w)  \\
    &\lesssim\sum_{j\in\mathcal{M}}\int_{D(a_j,2\rho)}\frac{|f'(w)|^p(1-|w|^2)^{p+p\delta+2p-2}}{|1-\ol{w}z|^{p(3+\delta)}}dA(w)\\
    &\lesssim\|f\|^p_{\Blog}\int_{\Omega_{\frac{\epsilon}{2}}(f)}\frac{(1-|w|^2)^{p+p\delta+2p-2-p\b}}{|1-\ol{w}z|^{p(3+\delta)}\left( \log\frac{e}{1-|w|^2}\right)^{p\gamma}}dA(w).
    \end{align*}	
Applying Fubini's theorem, we get
    \begin{align*}
  & \|g\|^q_{DT_p^q(\a)} 
    \asymp\int_{\T}\left( \int_{\Gamma(\eta)}|g'(z)|^p(1-|z|^2)^{\a} dA(z) \right)^{\frac{q}{p}}|d\eta|\\
    \lesssim&\int_{\T}\left( \int_{\Gamma(\eta)} \|f\|^p_{\Blog}\int_{\Omega_{\frac{\epsilon}{2}}(f)}\frac{(1-|w|^2)^{p+p\delta+2p-2-p\b}}{|1-\ol{w}z|^{p(3+\delta)}\left( \log\frac{e}{1-|w|^2}\right)^{p\gamma}}dA(w) (1-|z|^2)^{\a} dA(z) \right)^{\frac{q}{p}}|d\eta|\\
    \lesssim&\|f\|_{\Blog}^q\int_{\T}\left(   \int_{\Omega_{\frac{\epsilon}{2}}(f)}\frac{(1-|w|^2)^{p+p\delta+2p-2-p\b}}{\left( \log\frac{e}{1-|w|^2}\right)^{p\gamma}} \left(\int_{\Gamma(\eta)} \frac{(1-|z|^2)^{\a}}{|1-\ol{w}z|^{p(3+\delta)}} dA(z)\right) dA(w) \right)^{\frac{q}{p}}|d\eta|.
    \end{align*}
Notice that for $z\in\Gamma(\eta)$, $|1-\ol{\eta}z|\asymp1-|z|^2$. Hence, for any $t>\max\left\{\a+1,\a+\frac{p}{q}\right\}$ and $p(3+\delta)>t+2$, using Lemma \ref{le3}, we get
\begin{align*}
\int_{\Gamma(\eta)} \frac{(1-|z|^2)^{\a}}{|1-\ol{w}z|^{p(3+\delta)}} dA(z)
\lesssim&\int_{\D}\frac{(1-|z|^2)^t}{|1-\ol{\eta}z|^{t-\a}|1-\ol{w}z|^{p(3+\delta)}}dA(z)\\
\lesssim&\frac{1}{(1-|w|^2)^{p(3+\delta)-t-2}|1-\ol{\eta}w|^{t-\a}}.
\end{align*}
Since  $(\ref{11})$ still holds if $\epsilon$ is replaced by $\frac{\epsilon}{2}$, using Lemma \ref{le1} and similar to $(\ref{p1})$, we have
\begin{align*}
&\|g\|_{DT_p^q(\a)}^q\\
\lesssim&\|f\|_{\Blog}^q\int_{\T}\left(   \int_{\Omega_{\frac{\epsilon}{2}}(f)}\frac{(1-|w|^2)^{p+p\delta+2p-2-p\b}}{  (1-|w|^2)^{p(3+\delta)-t-2}|1-\ol{\eta}w|^{t-\a}  \left( \log\frac{e}{1-|w|^2}\right)^{p\gamma}}  dA(w) \right)^{\frac{q}{p}}|d\eta|\\
\lesssim&\|f\|_{\Blog}^q\int_{\T}\left(   \int_{\Omega_{\frac{\epsilon}{2}}(f)}
\left( \frac{1-|w|^2}{|1-\ol{\eta}w|} \right)^{t-\a}\frac{(1-|w|^2)^{\a-p\b}}{\left(\log\frac{e}{1-|w|^2} \right)^{p\gamma}}dA(w) \right)^{\frac{q}{p}}|d\eta|\\
\lesssim&\|f\|_{\Blog}^q\int_{\T}\left(\int_{\Gamma(\eta)\cap\Omega_{\frac{\epsilon}{2}}(f)}\frac{(1-|w|^2)^{\a-p\b}}{\left(\log\frac{e}{1-|w|^2} \right)^{p\gamma}}dA(w) \right)^{\frac{q}{p}}|d\eta|\\
<&\infty.
\end{align*}
That is, $g\in DT_p^q(\a)$. Therefore, for any $\epsilon>0$, there exists a function $g\in DT_p^q(\a)\cap \Blog $ such that $\|f-g\|_{\Blog}\lesssim \epsilon$, i.e., $f\in \dt$. 
The proof is complete.
\end{proof}

The characterization in Theorem \ref{th1} not only encompasses many existing results but also presents a new characterization  in contrast to the findings in the original paper (see \cite{gmp, ql,blz,gg}). This shows the generality of our approach. In particular, when $\b=1, \gamma=0$, we get a new characterization of Bloch functions in \(C_\mathcal{B}(\mathcal{D}^p_{\alpha} \cap \mathcal{B})\) (see \cite{gg}).
\begin{Corollary}
Let $1\le p<\infty$, $p-2<\a\le p-1$ and $f\in\mathcal{B}$. Then \(f\in  C_\mathcal{B}(\mathcal{D}^p_{\alpha} \cap \mathcal{B})\) if and only if for any $\epsilon>0$,
\begin{align*}
	\int_{\T}\left(\int_{ \Gamma(\eta)\cap \widetilde{\Omega}_{\epsilon}(f) }(1-|z|^2)^{\a-1-p} dA(z)
	\right)|d\eta|<\infty.
\end{align*}
Here $$
\widetilde{\Omega}_{\epsilon}(f)=\left\{z\in \D: |f'(z)|(1-|z|^2) \geq \epsilon\right\}.
$$
\end{Corollary}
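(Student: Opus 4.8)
The plan is to obtain this Corollary as a direct specialization of Theorem \ref{th1}(iii), so the whole task reduces to matching parameters. Recall from the introduction that $\mathcal{D}^p_\alpha = DT_p^p(\alpha-1)$ for $\alpha>-1$, and that the classical Bloch space $\mathcal{B}$ is exactly $\Blog$ with $\beta=1$ and $\gamma=0$. Accordingly I would invoke Theorem \ref{th1}(iii) with the choices $q=p$, with the tent-space parameter $\alpha$ of the theorem replaced by $\alpha-1$, and with $\beta=1$, $\gamma=0$. Under this substitution $DT_p^p(\alpha-1)\cap\Blog=\mathcal{D}^p_\alpha\cap\mathcal{B}$, and both closures are taken in the same $\mathcal{B}$-norm, so any equivalence supplied by the theorem transfers verbatim.

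The first step is to verify that the hypotheses of part (iii) collapse to those of the Corollary. The double inequality $\frac{(\alpha-1)+2}{p}\le\beta\le\frac{(\alpha-1)+2}{p}+\frac{1}{q}$ becomes $\frac{\alpha+1}{p}\le 1\le\frac{\alpha+2}{p}$, which is equivalent to $\alpha\le p-1$ together with $\alpha\ge p-2$. The supplementary constraint $1\le p<(\alpha-1)+3=\alpha+2$ then forces both $p\ge 1$ and, strictly, $\alpha>p-2$. Combining these gives precisely $1\le p<\infty$ and $p-2<\alpha\le p-1$, exactly the hypothesis of the Corollary. Moreover $p\ge 1$ places us in the first sub-case of part (iii), so the uniform-continuity assumption (needed only for $0<p<1$) never enters. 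One also checks $\alpha-1>-2$, i.e. $\alpha>-1$, which holds since $\alpha>p-2\ge -1$.

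The second step is to observe that the geometric objects specialize correctly. With $\beta=1$ and $\gamma=0$ the level set $\Omega_{\epsilon}(f)$ reduces to $\{z\in\D:|f'(z)|(1-|z|^2)\ge\epsilon\}=\widetilde{\Omega}_{\epsilon}(f)$. In condition $(\ref{11})$ the inner exponent becomes $(\alpha-1)-p\cdot 1=\alpha-1-p$, the logarithmic weight $(\log\frac{e}{1-|z|^2})^{p\gamma}$ is identically $1$, and the outer exponent is $q/p=1$; hence $(\ref{11})$ becomes exactly the displayed integral of the Corollary. The stated biconditional then follows immediately from Theorem \ref{th1}(iii).

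There is no genuine analytic obstacle here: all of the substance lives in Theorem \ref{th1}, and the Corollary is a translation of notation. The only point deserving real attention is the bookkeeping of the parameter ranges at the endpoints—specifically, confirming that the strict inequality $p-2<\alpha$ in the Corollary is precisely what the hypothesis $p<\alpha+2$ of part (iii) produces, and that the endpoint $\alpha=p-1$ (corresponding to $\beta=\frac{\alpha+1}{p}$, the lower edge of the admissible $\beta$-interval) is indeed included.
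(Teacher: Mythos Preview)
Your proposal is correct and matches the paper's approach: the paper also derives this corollary by specializing Theorem~\ref{th1}(iii) with $\beta=1$, $\gamma=0$ (and implicitly $q=p$, tent-space parameter $\alpha-1$, using $\mathcal{D}^p_\alpha=DT_p^p(\alpha-1)$), offering no separate argument. Your careful check that the parameter constraints reduce exactly to $1\le p<\infty$, $p-2<\alpha\le p-1$ and that condition~(\ref{11}) becomes the displayed integral is precisely the bookkeeping required.
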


For another case, when $\b=1$ and $\gamma=1$, we obtain a new characterization of logarithmic Bloch functions in
\(C_{\mathcal{B}_{{\log}}}(\mathcal{D}^2_{\alpha}\cap\mathcal{B}_{{\log}})\) (see \cite{blz,ql}).

\begin{Corollary}
Let $0<\a\le 1$ and $f\in\mathcal{B}_{\log}$. Then \(f\in C_{\mathcal{B}_{{\log}}}(\mathcal{D}^2_{\alpha}\cap\mathcal{B}_{{\log}})\) if and only if for any $\epsilon>0$,
\begin{align*}
	\int_{\T}\left(\int_{ \Gamma(\eta)\cap \widehat{\Omega}_{\epsilon}(f)}\frac{(1-|z|^2)^{\a-3}}{\left(\log\frac{e}{1-|z|^2}\right)^{2}} dA(z)
	\right)|d\eta|<\infty.
\end{align*}
Here
$$
\widehat{\Omega}_{\epsilon}(f)=\left\{z\in \D: |f'(z)|(1-|z|^2) \log\frac{e}{1-|z|^2}   \geq \epsilon\right\}.
$$
\end{Corollary}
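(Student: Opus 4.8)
The plan is to obtain this Corollary as the special case $p=q=2$, $\beta=1$, $\gamma=1$ of Theorem \ref{th1}(iii), so that the argument reduces to tracking the parameters. First I would recall from the introduction that $\mathcal{D}^p_\beta = DT_p^p(\beta-1)$; taking $p=2$ this reads $\mathcal{D}^2_\alpha = DT_2^2(\alpha-1)$, with equivalent norms. Hence $\mathcal{D}^2_\alpha\cap\mathcal{B}_{\log}$ is literally $DT_2^2(\alpha-1)\cap\mathcal{B}_{\log}$ as a set, so their closures in the $\mathcal{B}_{\log}$-norm coincide. Writing $a=\alpha-1$ for the tent-space parameter occurring in Theorem \ref{th1}, and noting that $\Blog$ with $\beta=1,\gamma=1$ is precisely $\mathcal{B}_{\log}$, the space $C_{\mathcal{B}_{{\log}}}(\mathcal{D}^2_{\alpha}\cap\mathcal{B}_{{\log}})$ is exactly the closure treated in Theorem \ref{th1}(iii) under these parameter choices.

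Next I would verify that the hypotheses of Theorem \ref{th1}(iii) are met. The band condition $\frac{a+2}{p}\le\beta\le\frac{a+2}{p}+\frac{1}{q}$ becomes $\frac{\alpha+1}{2}\le 1\le\frac{\alpha+2}{2}$, equivalently $0\le\alpha\le 1$; the requirement $1\le p<a+3$ becomes $2<\alpha+2$, i.e.\ $\alpha>0$; and the standing assumption $a>-2$ amounts to $\alpha>-1$. All three hold on the stated range $0<\alpha\le 1$. Moreover, for $\beta=1,\gamma=1$ the level set $\Omega_\epsilon(f)$ of the theorem is by definition exactly $\widehat{\Omega}_\epsilon(f)$.

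Finally I would substitute the parameters into condition $(\ref{11})$. The exponent $\alpha-p\beta$ of $(1-|z|^2)$ becomes $a-2=(\alpha-1)-2=\alpha-3$, the power $p\gamma$ on the logarithm becomes $2$, and the outer exponent $q/p$ becomes $1$, so $(\ref{11})$ collapses to
\[
\int_{\T}\left(\int_{\Gamma(\eta)\cap\widehat{\Omega}_\epsilon(f)}\frac{(1-|z|^2)^{\alpha-3}}{\left(\log\frac{e}{1-|z|^2}\right)^{2}}\,dA(z)\right)|d\eta|<\infty,
\]
which is the asserted characterization. The only delicate point is the bookkeeping in the shift $\alpha\mapsto a=\alpha-1$, which is what places the band condition on the interval $(0,1]$ rather than on a translate of it; once this shift is tracked correctly there is no further obstacle, since all of the analytic content is already carried out in Theorem \ref{th1}(iii).
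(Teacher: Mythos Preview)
Your proposal is correct and follows exactly the paper's approach: the corollary is obtained by specializing Theorem \ref{th1}(iii) to $p=q=2$, $\beta=\gamma=1$ and using the identification $\mathcal{D}^2_\alpha=DT_2^2(\alpha-1)$, with the parameter range $0<\alpha\le 1$ arising precisely from the band condition together with $1\le p<a+3$. The paper gives no separate argument beyond this specialization, so there is nothing to add.
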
\msk

\noindent \textbf{Acknowledgments.} 
 The   authors are supported  by GuangDong Basic and Applied Basic Research Foundation (No. 2023A1515010614).\msk

\noindent\textbf{Data Availability.}  Data sharing is not applicable for this article as no datasets were generated or analyzed during the current study.\msk

 \noindent \textbf{Conflict of interest.}   The authors declare no competing interests.\msk

\end{document}